\def\NZQ{\mathbb}               
\def\NN{{\NZQ N}}
\def\ZZ{{\NZQ Z}}
\newtheorem{Theorem}{Theorem}[section]
\newtheorem{Lemma}[Theorem]{Lemma}
\newtheorem{Definition}[Theorem]{Definition}
\newtheorem{Question}[Theorem]{Question}
\let\epsilon\varepsilon
\let\phi=\varphi
\let\kappa=\varkappa
\begin{document}

\title{ Counterexamples to local  monomialization in positive characteristic}
\author{Steven Dale Cutkosky}
\thanks{Partially supported by NSF}

\address{Steven Dale Cutkosky, Department of Mathematics,
University of Missouri, Columbia, MO 65211, USA}
\email{cutkoskys@missouri.edu}


\maketitle

\section{Introduction}
In this paper we give counterexamples to local monomialization along a valuation in positive characteristic.

A fundamental problem in algebraic geometry is to find a general notion of simple morphism, and to then establish that it is possible to simplify a dominant morphism $\phi:X\rightarrow Y$ of varieties by performing  suitable monoidal transforms (blow ups of nonsingular subvarieties) above $X$ and $Y$ to obtain such a simple morphism.  

A local formulation of this problem can be given by asking for an appropriate simple form of the extension of local rings $\mathcal O_{Y,q}\rightarrow \mathcal O_{X,p}$ if $p\in X$ and $\phi(p)=q$. The notion of simplification after performing sequences of monoidal transforms above $\mathcal O_{Y,q}$ and $\mathcal O_{X,p}$ can then be expressed by fixing a valuation dominating $\mathcal O_{X,p}$. Then the valuation will dominate unique local rings on any sequences of monoidal transforms of $X$ and $Y$. 

A possible notion of a local simple morphism is that of monomial inclusion of regular local rings. We call such a map a monomial extension. Simplification by performing monoidal transforms is  called local monomialization. In characteristic zero, local monomialization is a good definition of simplification 
as a local monomialization can always be achieved along any valuation (\cite{C} and \cite{C2}).
In characteristic $p>0$, it is shown in \cite{CP} that local monomialization is true for generically finite maps of surfaces, if the extension of valuation rings is defectless (a condition which is always true in characteristic zero). It is further shown in \cite{C3} that this result holds for arbitrary defectless extensions of two dimensional regular local rings. 

It is natural to ask if local monomialization can always be achieved in positive characteristic. In this paper we give an example (Theorem \ref{TheoremNM2}) showing that this is not the case. The example even shows the failure of ``weak local monomialization'' in positive characteristic. Weak local monomialization holds if a monomial form can be reached by allowing arbitrary birational extensions instead of insisting on sequences of monoidal transforms. 

In the remainder of this paper, we will restrict to the case of generically finite morphisms.

\subsection{Local Monomialization.}


Suppose that $K^*/K$ is a finite separable field extension, $S$ is an excellent local ring of $K^*$ ($S$ has quotient field
$\mbox{QF}(S)=K^*$) and $R$ is an excellent  local ring of $K$ such that $\dim S=\dim R$, $S$  dominates $R$ ($R\subset S$ and the maximal ideals $m_S$ of $S$ and $m_R$ of $R$ satisfy  $m_S\cap R=m_R$) and $\nu^*$ is a valuation of $K^*$ which dominates $S$ (the valuation ring $V_{\nu^*}$ of $\nu^*$ dominates $S$).  Let $\nu$ be the restriction of $\nu^*$ to $K$.

The notation that we use in this paper  is explained  in more detail in Section \ref{SecNoc}.

\begin{Definition}\label{DefMon} $R\rightarrow S$ is {\it monomial} if 
$R$ and $S$ are regular local rings of the same dimension $n$ and  there exist regular systems of parameters $x_1,\ldots,x_n$ in $R$ and $y_1,\ldots,y_n$ in $S$, units $\delta_1,\ldots,\delta_n$ in $S$
and an $n\times n$ matrix $A=(a_{ij})$ of natural numbers with nonzero determinant such that
\begin{equation}\label{eqI1}
x_i=\delta_i\prod_{j=1}^ny_j^{a_{ij}}\mbox{ for $1\le i\le n$.}
\end{equation}
\end{Definition}

If $R$ and $S$ have equicharacteristic zero and algebraically closed residue fields, then within the extension $\hat R\rightarrow \hat S$ there are regular parameters giving a form (\ref{eqI1}) with all $\delta_i=1$.

More generally, we ask if a given extension $R\rightarrow S$ has a local  monomialization along the valuation $\nu^*$.

\begin{Definition}\label{DefI1} A {\it local monomialization} of $R\rightarrow S$ along the valuation $\nu^*$ is a commutative diagram
$$
\begin{array}{lllll}
R_1&\rightarrow &S_1&\subset& V_{\nu^*}\\
\uparrow&&\uparrow&&\\
R&\rightarrow&S&&
\end{array}
$$
such that the vertical arrows are products of monoidal transforms (local rings of blowups of regular primes)
and $R_1\rightarrow S_1$ is monomial.
\end{Definition}

It is proven in Theorem 1.1 \cite{C} that a local monomialization always exists when $K^*/K$ are algebraic function fields over a (not necessarily algebraically closed)
field $k$ of characteristic 0, and $R\rightarrow S$ are algebraic local rings of $K$ and $K^*$ respectively. (An algebraic local ring  is  essentially of finite type over $k$.)

We can also define the weaker notion of a {\it weak local monomialization} by only requiring that the conclusions of Definition
\ref{DefI1} hold with the vertical arrows  being required to be birational (and not necessarily factorizable by products of monoidal transforms).

This leads to the following question for extensions $R\rightarrow S$ as defined earlier in  this paper.

\begin{Question}\label{IQ} Does there always exist a local monomialization (or at least a weak local monomialization) of extensions $R\rightarrow S$ of excellent local rings dominated by a valuation $\nu^*$ ?
\end{Question} 

As commented above, the question has a positive answer within  algebraic function fields over an arbitrary field of characteristic zero by Theorem 1.1 \cite{C}.

For the question to have a positive answer in positive characteristic or mixed characteristic it is of course necessary that some form of resolution of singularities be true.
This is certainly true in equicharacteristic zero, and is 
known to be true very generally in dimension $\le 2$ (\cite{LU}, \cite{Li}, \cite{CJS}) and in positive characteristic and dimension 3 (\cite{RES}, \cite{C1}, \cite{CP1} and \cite{CP2}). A few recent papers going beyond dimension three are \cite{dJ}, \cite{Ha}, \cite{BrV}, \cite{BeV}, \cite{H1}, \cite{T1}, \cite{T2}, \cite{KK} and \cite{Te}.

The case of two dimensional algebraic function fields over an algebraically closed field of positive characteristic is considered in \cite{CP}, where it is shown that monomialization is true  if $R\rightarrow S$ is a {\it defectless} extension  of two dimensional algebraic local rings over an algebraically closed field $k$ of  characteristic $p>0$ (Theorem 7.3 and Theorem 7.35 \cite{CP}). This result is shown for defectless extensions of two dimensional regular local rings in \cite{C3}. We will discuss the important concept of  defect later on in this introduction.

In this paper we show that weak monomialization (and hence monomialization) does not exist in general for extensions of  algebraic local rings of dimension $\ge 2$ over a field  $k$ of  char $p>0$, giving a negative answer to Question \ref{IQ}. We prove the following theorem in Section \ref{SecCE}:

\begin{Theorem}\label{TheoremNM2} (Counterexample to local and weak local monomialization) Let $k$ be a field of characteristic $p>0$ with at least 3 elements and let $n\ge 2$. Then
there exists a finite separable extension $K^*/K$ of  n dimensional  function fields over $k$,  a valuation $\nu^*$ of $K^*$ with restriction $\nu$ to $K$ and algebraic regular local rings $A$ and $B$ of $K$ and $K^*$ respectively, such that $B$ dominates $A$, $\nu^*$ dominates $B$ and there do not exist
 regular algebraic local rings $A'$ of $K$ and $B'$ of $K^*$ such that $\nu^*$ dominates $B'$, $B'$ dominates $A'$, $A'$ dominates $A$, $B'$ dominates $B$ and $A'\rightarrow B'$ is monomial.
\end{Theorem}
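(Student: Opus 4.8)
The plan is to exploit the phenomenon of \emph{defect}, since the earlier positive results (\cite{CP}, \cite{C3}) require a \emph{defectless} extension, and the failure of monomialization must therefore be engineered from a defect extension of valuation rings. The first step is to construct, for $n=2$, a finite separable extension $K^*/K$ of function fields together with a valuation $\nu^*$ on $K^*$ whose restriction $\nu$ to $K$ has \emph{ramification defect}, i.e.\ where the product $ef$ of the ramification index and residue degree is strictly smaller than the field degree $[K^*:K]$. The canonical way to produce defect in characteristic $p$ is via an Artin--Schreier (or purely inseparable-looking, but actually separable) extension of the form $z^p-z=f$ or $z^p - z = 1/x$ adapted so that the valuation of the defining element is forced into a position where no single monoidal transform can separate the value groups. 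I would choose $K=k(x_1,\dots,x_n)$, set $A$ to be a localization of a polynomial ring, and take $K^*=K(z)$ with $z$ satisfying a carefully chosen Artin--Schreier equation so that $\nu^*(z)$ lies in a group whose index in the value group of $\nu^*$ witnesses the defect.

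Next I would reduce the general $n\ge 2$ case to the $n=2$ case by a product construction: adjoin the extra variables $x_3,\dots,x_n$ as \emph{common} parameters that are fixed by the extension and on which $\nu^*$ behaves trivially (for instance via a composite valuation, or a valuation whose value group is lexicographically ordered with the two-dimensional defect piece on top). This keeps the extension finite and separable of the same degree and transports the obstruction intact. The heart of the argument is then an invariance statement: I would identify a numerical invariant of the pair $(A\to B,\nu^*)$ — most naturally built from the value groups $\Gamma_\nu\subset\Gamma_{\nu^*}$ and the residue field extension, together with the defect $\delta$ — and show that it is unchanged under monoidal transforms on both sides, hence unchanged under arbitrary birational extensions dominating the given rings. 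Concretely, I expect the relevant invariant to be something like $[\Gamma_{\nu^*}:\Gamma_\nu]\cdot[\text{residue ext}]\cdot p^{\delta}=[K^*:K]$, a form of the Abhyankar/defect formula that is stable along the tower inside $V_{\nu^*}$.

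I would then show that a monomial extension $A'\to B'$ forces the extension of value groups and residue fields to account for the \emph{full} degree $[K^*:K]$ with \emph{no} defect. This is the crux: from the monomial relations $x_i=\delta_i\prod_j y_j^{a_{ij}}$ with $\det A\ne 0$, the value group $\Gamma_\nu$ generated by the $\nu^*(x_i)$ sits inside $\Gamma_{\nu^*}$ generated by the $\nu^*(y_j)$ with index exactly $|\det A|$, and a matching count of residue degrees shows $ef=[K^*:K]$, i.e.\ the monomial form is automatically defectless. Since defect is preserved under the birational maps by the invariance step but must vanish for any monomial $A'\to B'$, and since my construction arranges $\delta>0$ (so $p^\delta>1$), no monomial $A'\to B'$ can dominate $A\to B$ inside $V_{\nu^*}$; this contradiction proves the theorem.

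The main obstacle I anticipate is establishing the invariance of defect under arbitrary birational (not merely monoidal) extensions, which is what upgrades the result to a counterexample to \emph{weak} local monomialization rather than only the monoidal-transform version. This requires the theory of defect of valuations in the henselization (where defect is most robustly defined and is known to be multiplicative in towers), together with a verification that the chosen Artin--Schreier extension genuinely carries defect $\delta\ge 1$ at the specified valuation — a computation in the value group of an appropriately ordered rational-rank or high-rank valuation that must be done with care to ensure both separability of $K^*/K$ and the nonvanishing of the defect simultaneously, since the naive Artin--Schreier element can fail to produce defect for a generic valuation.
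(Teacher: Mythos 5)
Your strategy contains a fatal gap at precisely the step you identify as the crux: the claim that a monomial extension $A'\rightarrow B'$ forces the extension of valuation rings to be defectless is false. The paper's own introduction records a counterexample to this implication: Theorem 7.38 of \cite{CP} exhibits a \emph{defect} extension of two dimensional algebraic function fields in positive characteristic for which local monomialization (hence a monomial $A'\rightarrow B'$ dominated by the valuation) \emph{does} exist. Your supporting computation also breaks down on its own terms: from $x_i=\delta_i\prod_j y_j^{a_{ij}}$ one only obtains $\nu^*(x_i)=\sum_j a_{ij}\nu^*(y_j)$, and the index of the subgroup generated by the $\nu^*(x_i)$ inside the one generated by the $\nu^*(y_j)$ equals $|\det(a_{ij})|$ only when the $\nu^*(y_j)$ are rationally independent; in any example whose value group is $\frac{1}{p^{\infty}}\ZZ$ (rational rank one, which is exactly the situation in the actual counterexample) this fails, and moreover $\Gamma_{\nu}$ and the residue field of $\nu$ are not determined by the regular parameters of $A'$ alone. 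So while the defect is indeed necessarily positive in any counterexample (by the positive results of \cite{CP} and \cite{C3}), the defect cannot by itself be the obstruction, and no invariance argument of the kind you propose can close the proof.

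The paper's argument is of a completely different and much more explicit nature. It takes $u=x^p(1+y)$, $v=y^p+x$ in $K^*=k(x,y)$, builds an explicit infinite sequence of quadratic transforms $B=B_0\rightarrow B_1\rightarrow\cdots$ whose union is declared to be $V_{\nu^*}$, and invokes Abhyankar's factorization theorem (every two dimensional regular local ring of $K^*$ dominating $B$ and dominated by $V_{\nu^*}$ occurs in this unique sequence of quadratic transforms, and likewise below) to reduce \emph{weak} monomialization to monomialization along this explicit tower; this enumeration, not an invariant, is what rules out arbitrary birational $A'$ and $B'$. A normal form for $u,v$ in $\hat B_{pi}$ is shown to reproduce itself after each block of $p$ quadratic transforms, and a leading-form computation via Weierstrass preparation shows that no map $A_{pi}\rightarrow B_{pi+j}$ is monomial. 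The passage to $n>2$ is likewise simpler than your composite-valuation suggestion: one merely adjoins indeterminates $t_1,\ldots,t_{n-2}$ to the ground field and observes that every relevant local ring contains $k(t_1,\ldots,t_{n-2})$. As written, your proposal does not prove the theorem.
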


We have that the defect $\delta(\nu^*/\nu)=2$ in the example of Theorem \ref{TheoremNM2} (with $\nu=\nu^*|K$).

In \cite{C} and \cite{CP}, a very strong form of local monomialization is established within characteristic zero algebraic function fields which we call strong local monomialization (Theorem 5.1 \cite{C} and Theorem 48 \cite{CP}).
This form is stable under appropriate sequences of monoidal transforms and encodes the classical invariants of the extension of valuation rings. In \cite{CP}, we show that strong local monomialization is true for defectless extensions of two dimensional algebraic function fields
(Theorem 7.3 and Theorem 7.35 \cite{CP}). This result is extended in \cite{C3} to defectless extensions of 2 dimensional regular local rings.
We give an example in \cite{CP} (Theorem 7.38 \cite{CP}) showing that strong local monomialization is not generally true for defect extensions of two dimensional algebraic function  fields (over a field of positive characteristic). However, local monomialization is true for this example, as is shown in Theorem 7.38, \cite{CP}.

We now define the defect of an extension of valuations. The role of this concept in local uniformization was  observed by Kuhlmann \cite{Ku1} and \cite{Ku2}.
A good introduction to the role of defect in valuation theory is given in \cite{Ku1}.
Its basic properties are developed  in  Section 11, Chapter VI \cite{ZS2},
\cite{Ku1} and
Section 7.1 of \cite{CP}.
Suppose that $K^*/K$ is a finite Galois extension of fields of characteristic $p>0$. The splitting field  $K^s(\nu^*/\nu)$ of $\nu$ is the smallest field $L$ between $K$ and   $K^*$ with the property  that $\nu^*$ is the only extension to $K^*$ of $\nu^*|L$. The defect $\delta(\nu^*/\nu)$ is defined by the identity
$$
[K^*:K^s(\nu^*/\nu)]=f(\nu^*/\nu)e(\nu^*/\nu)p^{\delta(\nu^*/\nu)}
$$
(Corollary to Theorem 25 , Section 12, Chapter VI \cite{ZS2}).
In the case when $K^*/K$ is only finite separable, we define the defect by
$$
\delta(\nu^*/\nu)=\delta(\nu'/\nu)-\delta(\nu'/\nu^*)
$$
where $\nu'$ is an extension of $\nu^*$ to a Galois closure $K'$ of $K^*$ over $K$.
The ramification index is
$$
e(\nu^*/\nu)=|\Gamma_{\nu^*}/\Gamma_{\nu}|
$$
and reduced degree is
$$
f(\nu^*/\nu)=[V_{\nu^*}/m_{\nu^*}:V_{\nu}/m_{\nu}].
$$

The defect is equal to zero if the residue field $V_{\nu}/m_{\nu}$  has characteristic zero (Theorem 24, Section 12, Chapter VI \cite{ZS2}) or if $V_{\nu}$ is a DVR (Corollary to Theorem 21, Section 9, Chapter V \cite{ZS1}).

Birational properties of two dimensional regular local rings along a valuation are generally the same in positive and mixed characteristic as in characteristic zero (this is illustrated in \cite{Ab1}, \cite{L2}, \cite{Sp} and \cite{CV1}), but properties related to ramification are very different, as illustrated by \cite{Ab5}, \cite{CP} and Theorem \ref{TheoremNM2} of this paper. However, for positive characteristic defectless extensions, the behavior in positive characteristic is similar to that of characteristic zero (\cite{CP}, \cite{C3}, \cite{GHK}, \cite{GK}).

We thank the referee for their careful reading of this  manuscript, and suggesting some improvements in the introduction.

\section{Notation and Preliminaries}\label{SecNoc}
\subsection{Local algebra.}  All rings will be commutative with identity. A ring $S$ is essentially of finite type over $R$ if $S$ is a local ring of a finitely generated $R$-algebra.
We will denote the maximal ideal of a local ring  $R$ by $m_R$, and the quotient field of a domain $R$ by $\mbox{QF}(R)$. 
(We do not require that a local ring be Noetherian). 
Suppose that $R\subset S$ is an inclusion of local rings. We will say that $S$ dominates $R$ if $m_S\cap R=m_R$.
If the local ring $R$ is a domain with $\mbox{QF}(R)=K$ then we will say that $R$ is a local ring of $K$. If $K$ is an algebraic function field over a field $k$ (which we do not assume to be algebraically closed) and a local ring $R$ of $K$ is essentially of finite type over $k$, then we say that $R$ is an algebraic local ring of $k$.  

Suppose that $K\rightarrow K^*$ is a finite field extension, $R$ is a local ring of $K$ and $S$ is a local ring of $K^*$. We will say that $S$ lies over $R$ if $S$ is a localization of the integral closure $T$ of $R$ in $K^*$. If $R$ is a local ring, $\hat R$ will denote the completion of $R$ by its maximal ideal $m_R$.

Suppose that $R$ is a regular local ring. A monoidal transform $R\rightarrow R_1$ of $R$ is a local ring of the form $R[\frac{P}{x}]_m$
where $P$ is a regular prime ideal in $R$ ($R/P$ is a regular local ring), $0\ne x\in P$   and $m$ is a prime ideal of $R[\frac{P}{x}]$ such that
$m\cap R=m_R$. $R_1$ is called a quadratic transform if $P=m_R$.

\subsection{Valuation Theory}
Suppose that $\nu$ is a valuation on a field $K$. We will denote by $V_{\nu}$ the valuation ring of $\nu$:
$$
V_{\nu}=\{f\in K\mid \nu(f)\ge 0\}.
$$
We will denote the value group of $\nu$ by $\Gamma_{\nu}$. Good treatments of valuation theory are Chapter VI of \cite{ZS2} and \cite{RTM}, which contain references to the original papers.
If $\nu$ is a valuation ring of an algebraic function field over a field $k$, we  insist that $\nu$ vanishes on $k\setminus\{0\}$,
and say that $\nu$ is a $k$ valuation.

If $\nu$ is a valuation of a field $K$ and $R$ is a local ring of $K$ we will say that $\nu$ dominates $R$ if the valuation ring
$V_{\nu}$ dominates $R$. Suppose that $\nu$ dominates $R$. A monoidal transform $R\rightarrow R_1$ is called a monoidal transform along $\nu$ if $\nu$ dominates $R_1$.

Suppose that $K^*/K$ is a finite separable extension, $\nu^*$ is a valuation of $K^*$ and $\nu$ is the restriction of $\nu$ to $K$.
The defect $\delta(\nu^*/\nu)$ is defined in the introduction to this paper. 
\subsection{Birational geometry of two dimensional regular local rings}
We recall some basic theorems  which we will make frequent use of.

\begin{Theorem}\label{ThmPre1}(Theorem  3 \cite{Ab1}) Suppose that $K$ is a field, and $R$ is a regular local ring of dimension two of $K$. Suppose that $S$ is another 2 dimensional regular local ring of $K$ which dominates $R$. Then there exists a unique sequence of quadratic transforms  
$$
R\rightarrow R_1\rightarrow \cdots \rightarrow R_n=S
$$
which factor $R\rightarrow S$.
\end{Theorem}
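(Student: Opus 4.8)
The plan is to pass to a well-chosen valuation and realize $R\to S$ as an initial segment of the quadratic sequence along it. Set $\mu=\operatorname{ord}_S$, the $m_S$-adic order valuation of $S$; since $S$ is regular its associated graded ring is a polynomial ring, so $\mu$ is a (rank one, discrete) valuation of $K$, and it is divisorial (its residue field has transcendence degree one over $S/m_S$). By construction $\mu$ dominates $S$, and hence dominates $R$.

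Next I would build the quadratic sequence of $R$ along $\mu$. The point is that, unlike a general two dimensional regular local ring, the valuation ring $V_\mu$ totally orders values, so the first transform is forced and exists: writing $m_R=(x,y)$ with $\mu(x)\le\mu(y)$ one has $y/x\in V_\mu$, and $R_1:=R[y/x]_{\mathfrak p}$ with $\mathfrak p=m_\mu\cap R[y/x]$ is a quadratic transform of $R$ satisfying $R\subsetneq R_1\subseteq V_\mu$ with $\mu$ dominating $R_1$ (indeed $m_R R[y/x]=xR[y/x]$ is principal, so $R[y/x]=R[\tfrac{m_R}{x}]$ is exactly a chart of $\operatorname{Bl}_{m_R}\operatorname{Spec} R$). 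Iterating gives $R=R_0\to R_1\to R_2\to\cdots$, each $R_{i+1}$ the unique quadratic transform of $R_i$ dominated by $\mu$, all lying in $V_\mu$ and dominated by $\mu$.

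The heart of the argument is to show this sequence reaches $S$. Because $\mu$ is divisorial, I would show the sequence is finite: the finiteness is governed by the classical Zariski--Abhyankar proximity analysis of the values $\mu(m_{R_i}):=\min\{\mu(f):f\in m_{R_i}\}$, reflecting that after finitely many blow-ups the center of $\mu$ becomes a codimension one point; equivalently the terminal ring $R_N$ satisfies $\operatorname{ord}_{R_N}=\mu$. Since $\mu=\operatorname{ord}_S$ and a two dimensional regular local ring is determined by its order valuation, the terminal ring is $R_N=S$. Granting this, $R_i\subseteq R_N=S$ for all $i$, and $S$ dominates each $R_i$: any $f\in m_{R_i}=m_\mu\cap R_i$ has $\mu(f)>0$ and so is a nonunit of $S$ (units of $S$ have $\mu$-value zero), giving $f\in m_S$ and hence $m_{R_i}=m_S\cap R_i$. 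Thus $R\to R_1\to\cdots\to R_N=S$ is a factorization of the required type. I expect this terminating step --- combining the finiteness theorem for divisorial valuations with the identification of $S$ by its order valuation --- to be the main obstacle.

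Finally, uniqueness is formal once the first transform is pinned down. In any factorization $R\to R_1'\to\cdots\to R_m'=S$ by quadratic transforms, the ring $R_1'$ is a quadratic transform of $R$ contained in $S$ and dominated by $S$, hence dominated by $\mu$; but the quadratic transform of $R$ dominated by $\mu$ is unique, so $R_1'=R_1$. Replacing $R$ by $R_1$ and inducting on $m$ yields $R_i'=R_i$ for all $i$ and $m=N$ (the case $S=R$ giving the empty sequence, $n=0$). An alternative, more purely algebraic route would peel off the first transform directly by proving that $m_R S$ is a principal ideal of $S$ and invoking the universal property of the blowup; but establishing that principality --- equivalently, that the coprime parts of $x,y$ in the unique factorization domain $S$ are comaximal --- is itself where the birationality hypothesis $\operatorname{QF}(R)=\operatorname{QF}(S)$ is essential, and the valuation-theoretic route above organizes this content more cleanly.
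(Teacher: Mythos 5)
The paper does not prove this statement; it is imported verbatim from Abhyankar (Theorem 3 of \cite{Ab1}), so the only comparison available is with the classical argument that the citation points to. Your outline is a recognizable valuation-theoretic route, and both your uniqueness argument and your derivation of ``$S$ dominates $R_i$'' from $\mu(f)>0$ are fine, but the existence half rests on two unproved inputs, and the second of these is a genuine gap. The first input --- that the quadratic sequence of $R$ along the divisorial valuation $\mu=\operatorname{ord}_S$ is finite and terminates at a ring $R_N$ with $\operatorname{ord}_{R_N}=\mu$ --- you flag honestly as the main obstacle; it can be proved (for an infinite sequence the union $\bigcup R_i$ would be all of $V_\mu$ by Lemma \ref{LemmaPre2}, yet its residue field is algebraic over $R/m_R$ while that of $V_\mu$ has transcendence degree one), but it is of the same order of difficulty as the theorem itself. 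The second input, that a two dimensional regular local ring is determined by its order valuation, is where the argument becomes essentially circular: $\operatorname{ord}_S(f)>0$ does \emph{not} imply $f\in S$ (e.g.\ $y^2/x$ has order $1$ but lies outside $k[x,y]_{(x,y)}$), so there is no direct way to recover $S$ from $V_\mu$, and the standard proof that $R'\mapsto\operatorname{ord}_{R'}$ is injective on two dimensional regular local rings of $K$ is itself a corollary of the factorization theorem you are trying to prove. Without an independent proof of ``$\operatorname{ord}_{R_N}=\operatorname{ord}_S$ implies $R_N=S$'' the argument is not closed.

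The classical proof, which is what Lemma \ref{LemmaPre2} is set up to support, is shorter and avoids both inputs: take \emph{any} valuation $\nu$ dominating $S$, show directly that the quadratic transform $R_1$ of $R$ along $\nu$ satisfies $R_1\subseteq S$ with $S$ dominating $R_1$ (the key lemma, using that $S/m_S$ is algebraic over $R/m_R$ by the dimension inequality), and iterate. If the resulting chain $R\subsetneq R_1\subsetneq\cdots\subseteq S$ never reached $S$ it would be infinite, its union would be the valuation ring $V_\nu$ by Lemma \ref{LemmaPre2}, and a valuation ring contained in and dominated by $S$ forces $S=V_\nu$, contradicting the fact that a two dimensional Noetherian regular local ring is not a valuation ring. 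I recommend you either supply an independent proof of the injectivity statement you invoke, or switch to this route.
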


\begin{Lemma}\label{LemmaPre2}(Lemma 12 \cite{Ab1}) Suppose that $R$ is a two dimensional regular local ring of a field $K$ and $\nu$ is a valuation of $K$ which dominates $R$.
Let 
$$
R\rightarrow R_1\rightarrow R_2\rightarrow \cdots
$$
be the infinite sequence of quadratic transforms along $\nu$. 
Then
$$
V_{\nu^*}=\cup_{i=1}^{\infty}R_i.
$$
\end{Lemma}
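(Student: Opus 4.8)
The plan is to prove the two inclusions separately, the containment $\bigcup_i R_i \subseteq V_\nu$ being immediate and the reverse containment carrying all the weight. Since $\nu$ dominates each $R_i$ by construction, every $R_i$ is contained in $V_\nu$, so $\bigcup_i R_i \subseteq V_\nu$; moreover $S := \bigcup_i R_i$ is a local ring with maximal ideal $\bigcup_i m_{R_i}$ that is dominated by $\nu$ and has quotient field $K$. For the reverse inclusion it suffices to show that every $f \in V_\nu$ lies in $R_j$ for some $j$.

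The reduction I would use rests on Theorem \ref{ThmPre1}. Suppose I can produce, for a given $f \in V_\nu$, a two dimensional regular local ring $T$ of $K$ with $R \subseteq T \subseteq V_\nu$ (domination) and $f \in T$. By Theorem \ref{ThmPre1} the inclusion $R \to T$ factors uniquely as a sequence of quadratic transforms $R = T_0 \to T_1 \to \cdots \to T_m = T$. Each intermediate $T_k$ is dominated by $T$, and $T$ is dominated by $\nu$, so by transitivity of domination each $T_k$ is dominated by $\nu$. Since the quadratic transform of a two dimensional regular local ring that is dominated by a fixed valuation is unique, an induction shows $T_k = R_k$ for all $k$; hence $T = R_m$ and $f \in R_m \subseteq S$. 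Thus everything reduces to the following capturing statement: every $f \in V_\nu$ lies in some two dimensional regular local ring of $K$ that dominates $R$ and is dominated by $\nu$.

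To capture $f$, write $f = g/h$ with $g, h \in R$; since $R$ is a unique factorization domain I may take $g, h$ coprime, so that $(g,h)$ is $m_R$-primary. Because $\nu(f) = \nu(g) - \nu(h) \ge 0$, the center of $\nu$ on the blowup of $(g,h)$ lies on the affine chart $\mathrm{Spec}\, R[g/h]$, on which $g/h$ has non-negative value; localizing $R[g/h]$ at the center of $\nu$ produces an excellent two dimensional local domain $T_0$ of $K$ that dominates $R$, is dominated by $\nu$, and contains $f$. Applying resolution of singularities for excellent surfaces (\cite{Li}, \cite{Ab1}) to $\mathrm{Spec}\, T_0$ and taking the local ring at the center of $\nu$ yields a two dimensional regular local ring $T \supseteq T_0$ dominated by $\nu$; then $f \in T_0 \subseteq T$, completing the capture.

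The main obstacle is exactly this capturing step, and specifically the finiteness (termination) built into it: equivalently, one must show that after finitely many quadratic transforms along $\nu$ the ideal $(g,h)$ becomes principal, which amounts to resolving the plane curve germ $\{gh = 0\}$ along $\nu$. Once $(g,h)R_j$ is principal, $f$ equals a Laurent monomial in the exceptional parameters of $R_j$ times a unit, and the hypothesis $\nu(f) \ge 0$ forces the monomial exponent to be non-negative, since each exceptional parameter has positive value; hence $f \in R_j$. I would therefore either cite surface resolution as above, or, to keep the argument self-contained, carry out Abhyankar's direct analysis: the orders of the strict transforms of $g$ and $h$ form non-increasing sequences of non-negative integers, which drop to zero in finitely many steps. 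The delicate point there, and the reason it matters that the sequence is genuinely infinite (equivalently, that $\nu$ is non-divisorial), is that the center of $\nu$ must eventually leave the strict transform of $\{gh = 0\}$, so that these strict transforms become units.
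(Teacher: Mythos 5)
The paper offers no proof of this statement; it is quoted as Lemma 12 of \cite{Ab1}, so there is nothing internal to compare against. Your overall architecture is sound: the inclusion $\bigcup_i R_i\subseteq V_\nu$ is indeed trivial, and the reduction of the reverse inclusion, via Theorem \ref{ThmPre1} and the uniqueness of the quadratic transform along $\nu$, to ``capture $f$ in some two dimensional regular local ring between $R$ and $V_\nu$'' is correct. Your first capturing route (blow up $(g,h)$, localize at the center of $\nu$, resolve) works, with two caveats worth making explicit: it needs $R$ (hence $T_0$) excellent, a hypothesis absent from the statement and from Abhyankar's elementary argument; and it needs the center of $\nu$ on $\operatorname{Spec} R[g/h]$ and on its resolution to be a closed point. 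The latter holds because a valuation whose center on some normal model has codimension one is divisorial, and a divisorial valuation admits only a finite sequence of quadratic transforms --- a classical fact you invoke (``equivalently, that $\nu$ is non-divisorial'') but do not prove.

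The genuine error is in your ``self-contained'' alternative. It is false that the orders of the strict transforms of $g$ and $h$ at the center of $\nu$ drop to zero in finitely many steps, and false that the center of $\nu$ must eventually leave the strict transform of $\{gh=0\}$. Take $R=k[x,y]_{(x,y)}$ and let $\nu$ be the rank two valuation composite of $\operatorname{ord}_{(y)}$ with the $x$-adic valuation of $\operatorname{QF}(R/(y))=k(x)$, so that $\nu(y)=(1,0)$ and $\nu(x)=(0,1)$ in $\ZZ^2$ ordered lexicographically. The quadratic sequence along $\nu$ is infinite ($R_i$ has regular parameters $x$ and $y_i=y/x^i$), the center of $\nu$ lies on the strict transform $V(y_i)$ of $V(y)$ at every stage, and that strict transform has order one forever; yet elements such as $y/x^n$ lie in $V_\nu$ and must be captured. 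What actually terminates is not the order of the strict transform but the principalization of the ideal $(g,h)$: after embedded resolution the total transform of $gh$ at the center of $\nu$ is a unit times a monomial in a regular system of parameters (in the example $y=x^iy_i$ is already such a monomial), and then a Euclidean algorithm on the two exponent vectors, driven by the signs of the $\nu$-values of their differences, makes one generator divide the other after finitely many further quadratic transforms. You correctly identify principalization of $(g,h)$ as the right target, but the termination mechanism you give for it fails exactly for these composite valuations, which the lemma must cover.
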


We also make use of the fact that ``embedded resolution of singularities '' is true within a regular local ring of dimension 2 (Theorem 2 \cite{Ab1}),
and the fact that resolution of singularities is true for two dimensional excellent local rings (\cite{Li}, \cite{CJS}).

\section{Counterexamples to Local Monomialization in Positive Characteristic}\label{SecCE}

In this section we prove the counterexample Theorem \ref{TheoremNM2} to local and weak local monomialization stated in the introduction.

We first prove the theorem with the assumption that $n=2$.

Let $k$ be a field of characteristic $p>0$ containing at least three elements. 
Let $K^*$ be the two dimensional rational function field $K^*=k(x,y)$.  
Let 
$$
u=x^p(1+y), v=y^p+x,
$$
and let $K$ be the two dimensional rational function field $K=k(u,v)$. $K^*$ is separable over $K$ since the Jacobian of $u$ and $v$ is not zero. We have that $K^*=K(y)$ and $y$ satisfies the relation
$$
y^{p^2+1}+y^{p^2}-yv^p+(u-v^p)=0,
$$
so $K^*$ is a finite extension of $K$. Let $A=A_0=k[u,v]_{(u,v)}$ and $B=k[x,y]_{(x,y)}$. We have that $B$ dominates $A$.

\begin{Lemma}\label{LemmaNM1} Suppose that $A$ and $B$ are two dimensional regular local rings containing a common coefficient field $k$ of characteristic $p$ with at least three elements such that $B$ dominates $A$.
Let $u,v$ be regular parameters in $A$ and $x,y$ be regular parameters in $B$. Suppose that in $\hat B=k[[x,y]]$ we have expressions
\begin{equation}\label{eqNM1}
u=x^p(c_0+f_0y+x\Lambda_0),
v=\tau_0(y)y^p+e_0x+x\Omega_0
\end{equation}
where $c_0,f_0,e_0\in k$ are nonzero, $\tau_0(y)\in k[[y]]$ is a unit series, $\Lambda_0,\Omega_0\in \hat B$ and $\mbox{ord}_{\hat B}(\Omega_0)\ge 1$.
Let $\overline \tau_0\in k$ be the constant term of $\tau_0(y)$.
Suppose that $\alpha\in k$ is such that $\alpha\ne 0$ and $\alpha\ne -\frac{\overline\tau_0}{e_0}$.

Consider the sequence of quadratic transforms
$$
B=B_0\rightarrow B_1\rightarrow \cdots \rightarrow B_p
$$
where 
$B_i$ has regular parameters $x_i,y_i$ for $1\le i\le p$ defined by 
$$
x=x_iy_i^i, y=y_i\mbox{ if }1\le i<p
$$
and
$$
x=x_p^p(y_p+\alpha), y=x_p.
$$
Then $B_i$ does not dominate any quadratic transform of $A$ for $0\le i<p$. The sequence of quadratic transforms of $A$ which are dominated by $B_p$ are
$$
A=A_0\rightarrow A_1\rightarrow \cdots \rightarrow A_p,
$$
$A_i$ has regular parameters $u_i,v_i$, defined by
$$
u=u_iv_i^i, v=v_i\mbox{ if }1\le i<p,
$$
and
$$
u=u_p^p(v_p+\frac{\alpha^pc_0}{(\overline\tau_0+\alpha  e_0)^p}), v=u_p.
$$
In $\hat B_p=k[[x_p,y_p]]$ we have expressions
\begin{equation}\label{eqNM2}
u_p=x_p^p(c_p+f_py_p+x_p\Lambda_p),
v_p=\tau_p(y_p)y_p^p+e_px_p+x_p\Omega_p
\end{equation}
where $c_p,f_p,e_p\in k$ are nonzero, $\tau_p(y_p)\in k[[y_p]]$ is a unit series, $\Lambda_p,\Omega_p\in \hat B_p$ and $\mbox{ord}_{B_p}(\Omega_p)\ge 1$,
of the form of (\ref{eqNM1}).
Further, $A\rightarrow B_i$ is not monomial for $0\le i<p$.

\end{Lemma}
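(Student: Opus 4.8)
The plan is to run everything through explicit computation in the completions $\hat B_i=k[[x_i,y_i]]$, organized around one clean detection principle: by the universal property of blowing up, a two dimensional regular local ring $B_i$ dominating a two dimensional regular local ring $C$ of $K$ dominates a quadratic transform of $C$ if and only if $m_C\hat B_i$ is principal. Together with the uniqueness of factorization into quadratic transforms (Theorem \ref{ThmPre1}), this criterion pins down the entire tower of quadratic transforms of $A$ dominated by each $B_i$, so the whole statement reduces to computing the ideals $m_{A_j}\hat B_i$ and checking principality.

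First I would substitute the quadratic transform relations into (\ref{eqNM1}). For $1\le i<p$, putting $x=x_iy_i^i$, $y=y_i$ gives $u=x_i^py_i^{ip}U$ with $U$ a unit and $v=y_i^iW$, where $W=\tau_0(y_i)y_i^{p-i}+e_0x_i+x_i\Omega_0$ has order one and is coprime to $x_i$ and $y_i$ (the case $i=0$ is similar, with $v$ a regular parameter). Hence $\gcd(u,v)=y_i^i$, so if $m_A\hat B_i=(u,v)\hat B_i$ were principal it would equal $(y_i^i)$, forcing $1\in(x_i^py_i^{i(p-1)},W)$; but this ideal is $m_{\hat B_i}$-primary and proper, a contradiction. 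This yields the first assertion, that $B_i$ dominates no quadratic transform of $A$ for $0\le i<p$.

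Next I would compute in $\hat B_p$ using $x=x_p^p(y_p+\alpha)$, $y=x_p$ and the characteristic $p$ identity $(y_p+\alpha)^p=y_p^p+\alpha^p$: one finds $u=x_p^{p^2}(y_p^p+\alpha^p)\cdot(\mathrm{unit})$ and $v=x_p^p\cdot(\mathrm{unit})$, the second unit being invertible precisely because $\alpha\neq-\overline\tau_0/e_0$. Thus $m_A\hat B_p=(v)=(x_p^p)$ is principal, and the same computation shows $m_{A_j}\hat B_p=(v)$ remains principal exactly as long as $u/v^{\,j+1}\in\hat B_p$, i.e. for $j\le p-1$; this produces $A_j$ with parameters $u/v^j,v$ for $j<p$. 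At the $p$-th step $u/v^p$ is a unit whose value at the closed point is $\alpha^pc_0/(\overline\tau_0+\alpha e_0)^p$ (again via Frobenius), so the last quadratic transform is centered there, giving $A_p$ with $u_p=v$ and $v_p=u/v^p-\tfrac{\alpha^pc_0}{(\overline\tau_0+\alpha e_0)^p}$, exactly as stated; by Theorem \ref{ThmPre1} this is the complete list. Substituting back yields (\ref{eqNM2}), and the arithmetic of the constants gives $c_p=\overline\tau_0+\alpha e_0$, $f_p=e_0$, $e_p=\alpha^pf_0/(\overline\tau_0+\alpha e_0)^p$ all nonzero, with $\tau_p$ a unit because its leading coefficient $c_0\overline\tau_0^p/(\overline\tau_0+\alpha e_0)^{2p}$ is nonzero (here $\overline\tau_0\neq0$ is used). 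Since $A_p\to B_p$ now has the form (\ref{eqNM1}), the $\gcd$ argument above shows $m_{A_p}\hat B_p$ is not principal, so $B_p$ dominates no further quadratic transform.

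Finally, for the non-monomiality of $A\to B_i$ ($0\le i<p$) I would argue by contradiction. If $A\to B_i$ were monomial then, by Definition \ref{DefMon}, $m_A\hat B_i$ would be a monomial ideal $(\xi^{a}\eta^{b},\xi^{c}\eta^{d})$ in some regular system of parameters $\xi,\eta$, and two independent regular parameters of $A$ would be units times monomials in $\xi,\eta$. The explicit factorization $m_A\hat B_i=y_i^i\,(x_i^py_i^{i(p-1)},W)$, whose second factor is $m_{\hat B_i}$-primary of order one, forces one of the monomial generators to be a pure power of a single parameter; hence $A$ would have to contain a regular parameter $P=(\mathrm{unit})\,y_i^{a}$, that is, with $\mathrm{div}_{\hat B_i}(P)$ supported on one irreducible curve. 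Writing $P=\alpha u+\beta v+w$ with $w\in m_A^2$ and testing against a monomial (weight) valuation $\omega$ with $\omega(x_i),\omega(y_i)$ generic and large, the value $\omega(P)$ must be a pure multiple of $\omega(y_i)$, whereas the explicit shape of $u,v$ in (\ref{eqNM1}) makes $\omega(\alpha u+\beta v+w)$ carry an unavoidable $\omega(x_i)$-contribution; this numerical incompatibility is the contradiction. I expect this last step to be the main obstacle: monomiality permits \emph{arbitrary} regular parameters on both sides, so the interfering correction $w\in m_A^2$ must be controlled, and the weight argument degenerates when $p=2$ (where $\omega(u)=\omega(v^2)$ is forced independently of the weights), so that case needs a separate finer analysis of the initial forms of $u$ and $v$.
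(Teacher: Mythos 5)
Your first two steps are correct and essentially reproduce the paper's own computations: the criterion that $B_i$ dominates a quadratic transform of $C$ if and only if $m_C\hat B_i$ is principal is a clean formalization of what the paper uses implicitly, your identification of the tower $A_0\rightarrow\cdots\rightarrow A_p$ is the same, and your constants $c_p=\overline\tau_0+\alpha e_0$, $f_p=e_0$, $e_p=\alpha^pf_0/(\overline\tau_0+\alpha e_0)^p$, $\tau_p(0)=c_0\overline\tau_0^p/(\overline\tau_0+\alpha e_0)^{2p}$ agree with the paper's. The genuine gap is in the last step, the non-monomiality of $A\rightarrow B_i$, which is the heart of the lemma. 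For $i=0$ your reduction is simply wrong: since $\gcd(u,v)=1$ in $\hat B$, monomiality only forces $u'=\delta_1\xi^a$, $v'=\delta_2\eta^d$ for \emph{some} regular system of parameters $\xi,\eta$ of $\hat B$; nothing ties $\xi$ or $\eta$ to $y$, so you cannot conclude that $A$ contains a parameter that is a unit times a power of $y$. Worse, the weaker conclusion you could legitimately draw --- that some regular parameter of $A$ is a unit times a pure power of a single parameter of $\hat B$ --- is not contradictory at all: $v$ has order one in $\hat B$, so $v$ itself is such an element. A contradiction can only come from playing the two members of the monomial pair against each other, which is what the paper does: Weierstrass preparation normalizes $v'=v$ (any admissible $v'$ is a unit times $v-\psi(u)$, which has the same shape (\ref{eqNM1}) as $v$), and then the complementary parameter $w$ with $u'=\lambda w^j$ must satisfy $x\nmid L(u')$, which is contradicted by showing $x\mid L(u-\phi(v))$ for every $\phi\in k[[v]]$.

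For $1\le i<p$ your UFD reduction to ``some regular parameter $P$ of $A$ equals a unit times $y_i^N$'' is valid, but the weight-valuation argument you propose to contradict it is a restatement of the problem, not a proof. The correction $w\in m_A^2$ genuinely does cancel the lowest $x_i$-divisible monomials: taking $w=-\frac{c_0}{e_0^p}v^p+\cdots$ kills $c_0x_i^py_i^{pi}$, so the ``unavoidable $\omega(x_i)$-contribution'' is exactly the assertion that has to be proved, and you concede you have not controlled $w$. The paper supplies the two missing ideas. First, Weierstrass preparation in $\hat A$ reduces the arbitrary correction to $u'=\epsilon(u-\phi(v))$ with $\phi\in k[[v]]$ a single power series, whose leading coefficient is then forced to be $\frac{c_0}{e_0^p}$ in degree $p$. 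Second, Frobenius, $v^p=y_i^{ip}\bigl(\tau_0(y_i)^py_i^{p(p-i)}+e_0^px_i^p+x_i^p\Omega_0^p\bigr)$, shows that no choice of $\phi$ can produce or cancel the monomial $f_0x_i^py_i^{pi+1}$ coming from $u$, which therefore survives in $u-\phi(v)$ and is divisible by $x_i$, the desired contradiction when $i<p-1$. Moreover, even this leading-form argument fails at $i=p-1$: there $L(u-\phi(v))$ actually \emph{is} the pure power $-\frac{c_0}{e_0^p}\overline\tau_0^p\,\overline y^{p^2}$, and the paper must go one order higher, observing that the coefficient of $\overline x^p\overline y^{p^2-p+1}$ in $u-\phi(v)$ is $f_0\ne 0$, contradicting divisibility by $\overline y^{p^2}$; any argument that stops at initial forms or initial weights will miss this case. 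Note finally that the paper's argument is uniform in $p\ge 2$; the special difficulty you anticipate at $p=2$ is an artifact of your framing, not of the problem.
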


\begin{proof}  First suppose that $1\le i<p$. Then we have an expression
$$
\begin{array}{lll}
u&=&x_i^py_i^{pi}(c_0+f_0y_i+x_iy_i^i\Lambda_0),\\ 
v&=&\tau_0(y_i)y_i^p+e_0x_iy_i^i+x_iy_i^i\Omega_0=y_i^i(\tau_0(y_i)y_i^{p-i}+e_0x_i+x_i\Omega_0)
\end{array}
$$
in $\hat B_i=k[[x_i,y_i]]$. Thus no quadratic transform of $A$ factors through $A\rightarrow B_i$.

Now we consider factorization through $B_p$. We have an expression
$$
\begin{array}{lll}
u&=&x_p^{p^2}(y_p+\alpha)^p(c_0+f_0x_p+x_p^p(y_p+\alpha)\Lambda_0),\\ 
&=& x_p^{p^2}(y_p+\alpha)^p(c_0+f_0x_p+x_p^2\tilde\Lambda_0)\\
v&=&\tau_0(x_p)x_p^p+e_0x_p^p(y_p+\alpha)+x_p^p(y_p+\alpha)\Omega_0=x_p^p((\overline\tau_0+\alpha e_0)+e_0y_p+x_p\hat \Omega)
\end{array}
$$
in $\hat B_p=k[[x_p,y_p]]$, where $\tilde \Lambda, \hat \Omega\in B_p$. 
We  have an expression
$$
\frac{u}{v^p}= \left(\frac{y_p+\alpha}{(\overline\tau_0+\alpha e_0)+e_0 y_p+x_p\hat\Omega}\right)^p(c_0+f_0x_p+x_p^2\tilde \Lambda_0).
$$
There is an expression
$$
\frac{y_p+\alpha}{(\overline\tau_0+\alpha e_0)+e_0 y_p+x_p\hat\Omega}=\theta+\sigma(y_p)y_p+x_p\tilde \Omega
$$
in $\hat B_p=k[[x_p,y_p]]$, where 
$$
\theta=\frac{\alpha}{\overline\tau_0+\alpha e_0},
$$
$\sigma(y_p)\in k[[y_p]]$ is a unit series, and $\tilde\Omega\in \hat B_p$.
We thus have an expression
$$
\begin{array}{lll}
\frac{u}{v^p}&=& (\theta^p+\sigma(y_p)^py_p^p+x_p^p\tilde \Omega^p)(c_0+f_0x_p+x_p^2\tilde \Lambda_0)\\
&=& \theta^pc_0+c_0\sigma(y_p)^py_p^p+\theta^pf_0x_p+x_p\Omega_1
\end{array}
$$
with $\Omega_1\in \hat B_p$ and $\mbox{ord}_{\hat B_p}\Omega_1\ge 1$.

Thus the sequence of quadratic transforms of $A$ which are dominated by $B_p$ are
$$
A\rightarrow A_1\rightarrow A_2\rightarrow \cdots\rightarrow A_p
$$
where 
$A_i$ has regular parameters $u_i, v_i$ for $1\le i\le p$, defined by
$$
u=u_iv_i^i, v=v_i
$$
if $1\le i<p$, and
$$
u=u_p^p(v_p+\beta), v=u_p
$$
with 
$$
\beta=\theta^pc_0=\frac{\alpha^pc_0}{(\overline\tau_0+\alpha e_0)^p}.
$$
We have expressions 
$$
\begin{array}{lll}
u_p&=& x_p^p(c_1+f_1y_p+x_p\Lambda_1)\\
v_p&=& \tau_1(y_p)y_p^p+e_1x_p+x_p\Omega_1
\end{array}
$$
where
$$
c_1=\overline\tau_0+\alpha e_0, f_1=e_0, \Lambda_1=\hat\Omega,
$$
and
$$
\tau_1(y_p)=c_0\sigma(y_p)^p, e_1=\theta^pf_0.
$$

We now prove that the extensions $A\rightarrow B$ and $A\rightarrow B_i$ for $0\le i<p$ are not monomial.
 We will first establish this for the extension $A\rightarrow B$. By (\ref{eqNM1}), we have that $m_AB$ is not invertible and has order 1 in $B$.    We will suppose that $u',v'$ are regular parameters in $A$ which give a monomial form in $B$, and derive a contradiction. It is sufficient to show that a monomial form cannot exist in $\hat B=k[[x,y]]$. At least one of the $u',v'$ must have a nonzero $v$ term in its expansion in $\hat A=k[[u,v]]$; without loss of generality, we may assume that it is $v'$. By the Weierstrass Preparation Theorem
 (Corollary 1, page 145 \cite{ZS2}), we have an expression
 $$
 v'=\gamma(v-\psi(u))
 $$
 where $\gamma \in \hat A$ is a unit and $\psi(u)\in k[[u]]$. Thus $v'$ has an expression in $B$ of the same form as $v$, so we may assume that $v'=v$.
 
 For $F\in \hat B$, define the leading form of $F$ to be 
 $$
 L(F)=\sum_{i+j=r}a_{ij}x^iy^j
 $$
 if $\mbox{ord}_{\hat B}(F)=r$ and $F=\sum_{i+j\ge r}a_{ij}x^iy^j$. We have that $v$ is irreducible (and regular) in $\hat B$ with $L(v)=e_0x$.
 $u'$ must have a nonzero $u$ coefficient in its expansion in $\hat A$, so by the Weierstrass Preparation Theorem, 
 $$
 u'=\epsilon(u-\phi(v))
 $$
 where $\epsilon\in \hat A$ is a unit and $\phi(v)\in k[[v]]$.
 $v\not\,\mid u$ in $\hat B$ so $v\not\,\mid u'$ in $\hat B$. Since $u',v$ are assumed to give a monomial form in $B$, 
 we have that 
 \begin{equation}\label{eqF1}
 u'=\lambda v^iw^j
 \end{equation}
 where $\lambda$ is a unit in $\hat B$, and $w\in \hat B$ is such that $v,w$ is a regular system of parameters in $\hat B$. Since $m_AB$ is not invertible in $B$, we have that $i=0$ in (\ref{eqF1}). Thus $L(u')=\overline\lambda L(w)^j$ for some nonzero constant $\overline\lambda\in k$.
 Since $L(v)=e_0x$, we then have that $x\not\,\mid L(u')$ in $\hat B$. Thus $\phi(v)\ne 0$. Write
 $$
 \phi(v)=\sum_{i=r}^{\infty}a_iv^i
 $$
 where $a_i\in k$ for all $i$ and $a_r\ne 0$. For all $s\in \ZZ_+$, 
 $$
 v^s=e_0^sx^s+\mbox{terms of order greater than $s$}
 $$
 so
 $$
 L(\phi(v))=a_re_0^rx^r.
 $$
 Since $L(u)=c_0x^p$ and $x\not\,\mid L(u')$, we have that $r=p$ and $a_r=-\frac{c_0}{e_0^r}$. We have that
 $$
 \begin{array}{lll}
 u-\phi(v)&=& c_0x^p+f_0x^py+x^{p+1}\Lambda_0
 -\frac{c_0}{e_0^p}v^p-a_{p+1}e_0^{p+1}v^{p+1}+\mbox{terms of order $>p+1$ in $\hat B$}\\
 &=& f_0x^py+x^{p+1}\Lambda_0-a_{p+1}e_0^{p+1}x^{p+1}+\mbox{terms of order $>p+1$ in $\hat B$}.
 \end{array}
 $$ 
 Thus $x\mid L(u')$, a contradiction.
 
 We now show that $A\rightarrow B_i$ is not monomial for $1\le i<p$, by assuming that $u',v'$ are regular parameters in $A$ which have a monomial form in $B$, and deriving a contradiction. As in the previous case, we may assume that $v'=v$. We have a factorization
 $v=y_i^iw$ where $w$ is irreducible and regular, and 
 $$
 L(w)=\left\{\begin{array}{ll} e_0x_i&\mbox{ if $i<p-1$}\\
 \overline\tau_0y_i+e_0x_i&\mbox{ if $i=p-1$}.
 \end{array}\right.
 $$
 where $\overline\tau_0=\tau_0(0)$.
  Since $u',v$ give a monomial form in $B$, we must have an expression
 $$
 u'=\lambda y_i^aw^b
 $$
 where $\lambda$ is a unit in $B$. By Weierstrass Preparation, 
 \begin{equation}\label{eqNM10}
 u'=\epsilon(u-\phi(v))
 \end{equation}
 where $\epsilon$ is a unit in $\hat A$ and $\phi(v)\in k[[v]]$. We have that $w\not\,\mid u$ in $\hat B$ so $w\not\,\mid u'$. Thus $b=0$ and $\phi(v)\ne 0$. Further,
 \begin{equation}\label{eqNM11}
 L(u')=\lambda_0y_i^a
 \end{equation}
 where $\lambda_0$ is the constant term of $\lambda$.

 First assume that $i<p-1$. Now
 \begin{equation}\label{eqNM12}
 L(u)=c_0x_i^py_i^{pi}.
 \end{equation}
 Further,
\begin{equation}\label{eqNM13}
L(v)=e_0x_iy_i^i.
\end{equation}
Write
$$
\phi(v)=\sum_{i=r}^{\infty} a_iv^j\mbox{ where $a_r\ne 0$}.
$$
The only way (\ref{eqNM11}), (\ref{eqNM12}) and (\ref{eqNM13}) could be possible is if $r=p$. Then
$$
a_r=\frac{c_0}{e_0^p}
$$
and
$$
u-\phi(v)=f_0x_i^py_i^{pi+1}+\mbox{ terms of order $>pi+p+1$ in $\hat B$.}
$$
Thus $x_i\mid L(u'-\phi(v))$, a contradiction to (\ref{eqNM11}).

Now assume that $i=p-1$. Set
$$
\overline x=x_{p-1}, \overline y=y_{p-1}.
$$
We have that 
\begin{equation}\label{Ex1}
u-\phi(v)=\sigma \overline y^a
\end{equation}
where $\sigma$ is a unit series in $\overline x,\overline y$, whence
$$
L(u-\phi(v))=\sigma_0\overline y^a
$$
where $\sigma_0=\sigma(0,0)$. We have that 
$$
L(u)=c_0\overline x^p\overline y^{p(p-1)}
$$
and
$$
L(v)=\overline y^{p-1}(\overline\tau_0\overline y+\overline e_0\overline x).
$$
Writing
$$
\phi(v)=\sum_{i=r}^{\infty}a_iv^i
$$
with $a_r\ne 0$, we have that $r=p$ and
$$
a_p=\frac{c_0}{e_0^p}.
$$
Thus
\begin{equation}\label{Ex2}
\begin{array}{lll}
u-\phi(v)&=&
c_0\overline x^p\overline y^{p(p-1)}+f_0\overline x^p\overline y^{p(p-1)+1}
-\frac{c_0}{e_0^p}(\overline\tau_0\overline y^{p^2}+e_0^p\overline x^p\overline y^{p(p-1)})+\mbox{ terms of order $>p^2+1$}\\
&=& -\frac{c_0}{e_0^p}\overline\tau_0\overline y^{p^2}+f_0\overline x^p\overline y^{p^2-p+1}+\mbox{ terms of order $>p^2+1$.}
\end{array}
\end{equation}
Since 
$$
L(u-\phi(v))=-\frac{c_0}{e_0^p}\overline\tau_0\overline y^{p^2},
$$
we see from (\ref{Ex1}) that $a=p^2$ and that $\overline y^{p^2}$ divides $u-\phi(v)$ in $\hat B_{p-1}$; but this is impossible since $f_0\ne 0$ in (\ref{Ex2}).

\end{proof}

We now prove Theorem \ref{TheoremNM2}. We first establish the theorem when $n=2$.
By Lemma \ref{LemmaNM1} we can inductively construct infinite sequences of quadratic transforms
\begin{equation}\label{Ex3}
\begin{array}{cccccccc}
B&=&B_0&\rightarrow B_1\rightarrow \cdots \rightarrow &B_p&  \rightarrow B_{p+1}\rightarrow \cdots\rightarrow&B_{2p}&  \rightarrow \cdots\\
&&\uparrow&&\uparrow&&\uparrow&\\
A&=&A_0&\rightarrow A_1\rightarrow \cdots \rightarrow &A_p&\rightarrow A_{p+1}\rightarrow \cdots\rightarrow &A_{2p}& \rightarrow \cdots
\end{array}
\end{equation}
such that each $B_{pi+j}$ with $1\le j<p$ has regular parameters $x_{pi+j},y_{pi+j}$  such that
$$
x_{pi}=x_{pi+j}y_{pi+j}^j, y_{pi}=y_{pi+j}
$$
and $B_{p(i+1)}$ has regular parameters $x_{p(i+1)}, y_{p(i+1)}$ such that
 
$$
x_{pi}=x_{p(i+1)}^p(y_{p(i+1)}+\alpha_{p(i+1)}), y_{pi}=x_{p(i+1)}
$$
for some  $0\ne \alpha_{p(i+1)}\in k$.

Further, each $A_{pi+j}$ with $1\le j<p$, has regular parameters $u_{pi+j},v_{pi+j}$  such that
$$
u_{pi}=u_{p_i+j}v_{pi+j}^j, v_{pi}=v_{pi+j}
$$
and $A_{p(i+1)}$ has regular parameters $u_{p(i+1)}, v_{p(i+1)}$ such that 
$$
u_{pi}=u_{p(i+1)}^p(v_{p(i+1)}+\beta_{p(i+1)}), v_{pi}=u_{p(i+1)}
$$
for some $0\ne \beta_{p(i+1)}\in k$.
And for all $i$, we have expressions in $\hat B_{pi}=k[[x_{pi},y_{pi}]]$
\begin{equation}\label{eqNMi}
u_{pi}=x_{pi}^p(c_{i}+f_{i}y_{pi}+x_{pi}\Lambda_{i}),
v_{pi}=\tau_i(y_{pi})y_{pi}^p+e_ix_{pi}+x_{pi}\Omega_i
\end{equation}
where $0\ne c_i,f_i,e_i\in k$, $\tau_i(y_{pi})\in k[[y_{pi}]]$ is a unit series, $\Lambda_i,\Omega_i\in \hat B_{pi}$ and $\mbox{ord}_{\hat B_{pi}}(\Omega_i)\ge 1$.

Since $K$ and $K^*$ are two dimensional algebraic function fields, 
$V^*=\cup_{i\ge 0}B_i$ is a valuation ring of $K^*$ and $V=\cup_{i\ge 0}A_i$ is a valuation ring of $K$ (by Lemma \ref{LemmaPre2})) such that the valuation ring $V^*\cap K=V$. 
Let $\nu^*$ be the valuation of $K^*$ which satisfies $\nu^*(x)=1$ (where $x,y$ are our regular parameters in $B$) and whose valuation ring is $V^*$. If $f\in V^*$, then for $i>>0$, we have an expression
$f=\gamma x_i^m$ where $\gamma$ is a unit in $B_i$ and $m\in \NN$. From the relation
$$
\nu^*(y_{pi})=\nu^*(x_{p(i+1)})=\frac{1}{p}\nu^*(x_{pi}) 
$$
we obtain that 
$$
\Gamma_{\nu^*}=\cup \nu^*(x_i)\ZZ=\frac{1}{p^{\infty}}\ZZ.
$$
Similarly, the restriction of $\nu^*$ to $K$ is the valuation $\nu$ such that $\nu(u)=p$ and 
$$
\Gamma_{\nu}=\frac{1}{p^{\infty}}\ZZ=\Gamma_{\nu^*}.
$$

The top row of (\ref{Ex3}) is the complete list of all two dimensional regular algebraic local rings of $K^*$ dominating $B$ and dominated by $V^*$ (by Theorem \ref{ThmPre1}), and the bottom row of (\ref{Ex3}) is the complete list of all two dimensional regular algebraic local rings of $K$ dominating $A$ and dominated by $V$. 
If $A_i\rightarrow B_l$ is monomial, and $A_i\rightarrow A_j$ is a sequence of quadratic transforms which are dominated by $B_l$, then $A_j\rightarrow B_l$ is also monomial; we thus reduce to consideration of monomialization within the diagrams
$$
\begin{array}{cc}
B_{pi}&\rightarrow \cdots \rightarrow B_{p(i+1)-1}\\
\uparrow&\\
A_{p_i},&\end{array}
$$
 and the conclusions of  Theorem \ref{TheoremNM2} for the valuation $\nu^*$ on the extension $A\rightarrow B$ (in the case that $n=2$) then follow from  Lemma \ref{LemmaNM1}.

 We now  establish Theorem \ref{TheoremNM2} in the case that $n>2$. Let $r=n-2$ and  $t_1,\ldots,t_r$ be indeterminates. Let $K=k(t_1,\ldots,t_r,u,v)$ and $K^*=k(t_1,\ldots,t_r,x,y)$ with $u=x^p(1+y)$ and $v=y^p+x$. $K^*/K$ is a finite separable extension of $n$ dimensional algebraic function fields over $k$. Let
 $$
 A=k[t_1,\ldots,t_r,u,v]_{(u,v)}\mbox{ and }B=k[t_1,\ldots,t_r,x,y]_{(x,y)}.
 $$
 $A$ and $B$ are algebraic local rings of $K/k$ and $K^*/k$ respectively.
 Let $k'=k(t_1,\ldots,t_r)$. Then $K$ and $K^*$ are two dimensional algebraic function fields over $k'$. Further,
 $A\cong k'[u,v]_{(u,v)}$ and $B\cong k'[x,y]_{(x,y)}$ are algebraic local rings of $K/k'$ and $K^*/k'$ respectively.
 
 Let $\nu^*$ be the valuation of $K^*/K$ constructed in the $n=2$ case of the theorem which has the property that there do not exist regular algebraic local rings $A'$ of $K/k'$ and $B'$ of $K^*/k'$ such that $\nu^*$ dominates $B'$, $B'$ dominates $A'$ dominates $A$, $B'$ dominates $B$ and $A'\rightarrow B'$ is monomial. $\nu'$ is also a $k$ valuation of $K'$, and any algebraic local ring of $K^*/k$ (respectively $K/k$) which dominates $B$ (respectively $A$) must contain $k'$ so is also an algebraic local
  ring of $K^*/k'$ (respectively $K/k'$). Thus we have constructed an example satisfying the conclusions of Theorem \ref{TheoremNM2} for any $n\ge 2$.
 \vskip .2truein 
 
In \cite{CP}, we show that strong local monomialization is true for defectless extensions of two dimensional algebraic function fields
(Theorem 7.3 and Theorem 7.35 \cite{CP}). This result is extended in \cite{C3} to defectless extensions of 2 dimensional regular local rings. Thus the example constructed in Theorem \ref{TheoremNM2} must be a defect extension. We in fact have,  by (3) of Theorem 7.33 \cite{CP},  that the defect  $\delta(\nu^*/\nu)=2$. (The proof of Theorem 7.33 assumes that $k$ is algebraically closed, but the conclusions of the theorem are valid for arbitrary $k$ with the additional assumption that $V^*/m_{V^*}=k$,
which holds in our example).

\end{document}